\newtheorem{thm}{Theorem}[section]
\newtheorem{ass}[thm]{\bf {Claim} }
\newtheorem{theorem}[thm]{\bf {Theorem} }
\newtheorem{conj}[thm]{\bf Conjecture}
\makeatletter\@addtoreset{chapter}{part}\makeatother
\newcommand{\xdownarrow}[1]{%
  {\left\downarrow\vbox to #1{}\right.\kern-\nulldelimiterspace}
}
\begin{document}

\title{Fourfolds}

 \author{B. Wang\\
\begin{CJK}{UTF8}{gbsn}
(汪      镔)
\end{CJK}}

\date {}

\maketitle

\begin{abstract}
We have found a ``non-purely-constructive" method of acquiring algebraic cycles involving 
multiple  steps.  This note tries to present the main idea in the last step 
by concentrating on an example of 4-folds. The method demonstrates a contrast to
traditional constructions.

\end{abstract}

\bigskip

  The idea below is based on two main assumptions (that require ``pure" constructions):\par
(1) Lefschetz standard conjecture holds;\par
(2) intersection of currents exists.\bigskip

Let's see the details. Let $X$ be a smooth projective variety of dimension $n$ over $\mathbb C$.
Let $$u\in H^2(X;\mathbb Z)$$ be a hyperplane section class.  We'll
denote the coniveau filtration of coniveau $i$ and degree $2i+k$ 
by $$N^i H^{2i+k}(X)$$ and the linear span
of sub-Hodge structures of coniveau $i$ and degree $2i+k$ by 
$$M^iH^{2i+k}(X).$$
\bigskip

\section{Assumption 1: Lesfchetz standard conjecture} The following assertion will be called the
``generalized Lefschetz standard conjecture''

\begin{conj}
For any whole numbers $p, q, k$ satisfying
$$p+q=n-k, p\leq q$$
the homomorphism
\begin{equation}\begin{array}{ccc}
u^{q-p}_a:  N^{p} H^{2p+k}(X) &\rightarrow & N^{q} H^{2q+k}(X)\\
\alpha &\rightarrow & \alpha\cdot u^{q-p}
\end{array}\end{equation}
is an isomorphism.
\end{conj}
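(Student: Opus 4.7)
The plan is to deduce the generalized statement from the classical hard Lefschetz theorem together with the algebraicity of its inverse (Assumption~1, applied on the full cohomology). Concretely, two compatibilities with the coniveau filtration have to be checked: (i) cupping with $u^{q-p}$ sends $N^{p}H^{2p+k}(X)$ into $N^{q}H^{2q+k}(X)$, so that the arrow in the statement is well defined; and (ii) the algebraic correspondence $\Lambda$ that inverts $u^{q-p}$ on all of $H^{2p+k}(X)\to H^{2q+k}(X)$ sends $N^{q}H^{2q+k}(X)$ back into $N^{p}H^{2p+k}(X)$. Injectivity is then automatic from classical hard Lefschetz, and surjectivity follows from (ii): any $\beta\in N^{q}H^{2q+k}(X)$ has unique hard-Lefschetz preimage $\Lambda(\beta)$, which by (ii) already lies in $N^{p}H^{2p+k}(X)$.

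For (i) I would argue geometrically. Take $\alpha\in N^{p}H^{2p+k}(X)$ supported on a closed $Y\subset X$ of codimension $\geq p$, and write $u^{q-p}=[H_{1}\cap\cdots\cap H_{q-p}]$ for generic hyperplane sections $H_{i}$. A Bertini-type argument makes $Y\cap H_{1}\cap\cdots\cap H_{q-p}$ of codimension $\geq q$, and $\alpha\cdot u^{q-p}$ is represented by a class supported on this intersection, placing it in $N^{q}H^{2q+k}(X)$. Whenever the $H_{i}$ cannot be chosen fully generic with respect to the representative of $\alpha$, the intersection has to be performed at the level of currents, and this is precisely where Assumption~2 enters.

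For (ii) I would use Assumption~1 to fix an algebraic correspondence $\Lambda\in CH^{n-(q-p)}(X\times X)$ inducing the inverse of hard Lefschetz. Let $\beta\in N^{q}H^{2q+k}(X)$ be supported on $W\subset X$ of codimension $\geq q$. Then $p_{2}^{\ast}\beta$ is supported on $X\times W$ of codimension $\geq q$, and the product $\Lambda\cdot p_{2}^{\ast}\beta$ is supported on $|\Lambda|\cap(X\times W)$, which under proper intersection has codimension $\geq n-(q-p)+q=n+p$ in $X\times X$. Pushing forward by the proper map $p_{1}\colon X\times X\to X$ yields a class supported in codimension $\geq p$, i.e.\ $\Lambda(\beta)\in N^{p}H^{2p+k}(X)$.

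The main obstacle, as in most arguments of this flavor, is (ii): classical Lefschetz delivers only the cohomological identity $\Lambda\circ u^{q-p}=\mathrm{Id}$ and gives no a~priori handle on supports. The codimension count above is clean only when $|\Lambda|\cap(X\times W)$ is a proper intersection, and Assumption~2 is what rescues the argument whenever one cannot move $\Lambda$ or $W$ into general position; excess components have to be absorbed into the current representing the product, otherwise the codimension drop could defeat the conclusion. Granting both assumptions, the combination of (i), (ii), and classical hard Lefschetz yields the bijection claimed in the conjecture, so the whole plan reduces, once current-theoretic intersection is available, to a bookkeeping of codimensions and degree shifts.
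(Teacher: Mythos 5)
First, note that the paper does not actually prove this statement: it is Conjecture 1.1, i.e.\ Assumption 1 of the paper, and the text only remarks that it ``is easily seen as an equivalent assertion of the famous usual Lefschetz standard conjecture,'' deferring the proof to the external reference [4]. So there is no in-paper argument to compare with; your proposal is an attempt at the derivation the paper merely alludes to, namely: usual Lefschetz standard conjecture $\Rightarrow$ generalized form. Your overall architecture is the right one: (i) $\cup\, u^{q-p}$ respects the coniveau filtration, so the map is well defined and injective by classical hard Lefschetz; (ii) the algebraic inverse correspondence $\Lambda$ respects coniveau in the other direction, giving surjectivity. Step (i) is fine, and in fact needs neither currents nor Assumption 2: the cup product refines to $H^{*}_{Y}(X)\otimes H^{*}_{H}(X)\to H^{*}_{Y\cap H}(X)$, and Bertini applied to the \emph{fixed} closed set $Y$ always lets you choose the $q-p$ hyperplanes so that $Y\cap H_{1}\cap\dots\cap H_{q-p}$ has codimension $\geq q$; there is no case where ``the $H_{i}$ cannot be chosen fully generic.''

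The genuine gap is in (ii). Your codimension count $\mathrm{codim}\bigl(|\Lambda|\cap(X\times W)\bigr)\geq n+p$ is valid only for a proper intersection, and your proposed rescue via Assumption 2 does not repair it: the supportivity property only says the intersection current is supported \emph{inside} $|\Lambda|\cap(X\times W)$, and if that set has excess components of codimension $<n+p$ then its image under $p_{1}$ can have codimension $<p$, so no conclusion about membership in $N^{p}H^{2p+k}(X)$ follows. ``Absorbing excess components into the current'' is not a step; it is the problem restated. Two standard fixes: (a) since $\Lambda_{*}$ on cohomology depends only on the rational equivalence class of $\Lambda$, apply Chow's moving lemma on $X\times X$ to replace $\Lambda$ by a rationally equivalent cycle meeting $X\times W$ properly; or (b) better, write $\beta=j_{!}\gamma$ with $j:\tilde W\to X$ a resolution of the codimension-$q$ support and $\gamma\in H^{k}(\tilde W)$, use the projection formula to get $\Lambda_{*}\beta=\mathrm{pr}_{X*}\bigl((1\times j)^{*}\Lambda\cup \mathrm{pr}_{\tilde W}^{*}\gamma\bigr)$, and observe that $(1\times j)^{*}\Lambda$ is an algebraic class of codimension $n-(q-p)$ on the $(n+p+k)$-dimensional variety $X\times\tilde W$, whose support therefore has dimension $q+k=n-p$ and projects to a closed subset of codimension $\geq p$ in $X$, independently of any transversality. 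With either repair the argument closes, entirely within classical algebraic geometry and without invoking Assumption 2.
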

\bigskip

This is easily seen as an equivalent assertion of the famous usual Lefschetz standard conjecture.
The proof of the conjecture is given in [4]. 

 \bigskip

\section{Assumption 2: intersection of currents}

The following is a short description of  ``intersection of currents" ([3]). 
Let $X$ be a compact differential manifold equipped with a de Rham data $\mathcal U$ (the data is irreverent to 
the result).  For any two closed currents $T_1, T_2$, there exists  another current,  
called the intersection of currents (depending on $\mathcal U$), 
\begin{equation}
[T_1\wedge T_2].
\end{equation}
which is a strong limit of a regularization. 
The intersection satisfies many basic properties. Three of them used below:  
\par

 (1) (Cohomologicity) $[T_1\wedge T_2]$ is closed and represents the cup-product \par\hspace{1cc} of the cohomology  of
$T_1, T_2$.

(2)  (Supportivity)
 $$supp([T_1\wedge T_2])\subset supp(T_1)\cap supp(T_2). $$

(3) If we consider two compact manifolds $X, C$. Let $P$ be the projection \par\hspace{1cc}
$C\times X\to X$.
Then up to an exact current,   any closed current $T$ on \par\hspace{1cc} $C\times X$ could satisfy
\begin{equation}
P (supp(T))=supp ( P_\ast (T)).
\end{equation}
\quad\quad \hspace{1cc} (property 3 is irrelevant to the intersection).

\bigskip

\section{ Main theorem}

\begin{theorem}
Let $X$ be a fourfold satisfying that $H^1(X;\mathbb Q)\neq 0$.  The assumptions in section 1, 2 imply the usual Hodge conjecture  on $X$.

\end{theorem}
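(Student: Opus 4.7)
The plan has three stages. First, reduce the Hodge conjecture on $X$ to the case of primitive Hodge classes in middle degree $H^4$. Second, use $H^1(X;\mathbb Q)\neq 0$ together with Conjecture~1.1 to produce auxiliary algebraic cycles of the needed coniveau. Third, apply the intersection of currents from Assumption~2 to place the primitive Hodge class into $N^2H^4(X;\mathbb Q)$.

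For the first stage, the Lefschetz $(1,1)$ theorem handles Hodge classes in $H^2(X;\mathbb Q)$. Applying Conjecture~1.1 with $(p,q,k)=(1,3,0)$, Hodge classes in $H^6$ correspond via the algebraic hard-Lefschetz inverse to Hodge classes in $H^2$, hence are algebraic. For $H^4$, the algebraic Lefschetz decomposition supplied by Assumption~1 writes any Hodge class as $\xi_0+u\cdot\xi_1+u^2\cdot\xi_2$ with $\xi_0$ primitive of type $(2,2)$, $\xi_1$ a Hodge class in $H^2$, and $\xi_2\in\mathbb Q$. The last two summands are already algebraic, so the problem is reduced to showing that a primitive Hodge class $\alpha\in H^{2,2}(X;\mathbb Q)$ (i.e.\ $u\cdot\alpha=0$) lies in $N^2H^4(X;\mathbb Q)$.

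For the second and third stages, the case $(p,q,k)=(0,1,3)$ of Conjecture~1.1 gives an isomorphism $u\colon H^1(X;\mathbb Q)\to N^1H^3(X;\mathbb Q)$, and since $H^1(X;\mathbb Q)\neq 0$ this produces a divisor $D\subset X$ carrying non-trivial coniveau-one classes in $H^3$. Combining $D$ with hyperplane sections and the algebraic Künneth/Lefschetz decomposition of $H^\ast(X\times X)$, one builds an algebraic correspondence $Z\subset X\times X$ of middle dimension whose induced endomorphism of $H^\ast(X;\mathbb Q)$ acts non-trivially on the primitive $(2,2)$ Hodge substructure containing $\alpha$. Represent $\alpha$ by a closed $(2,2)$-current $T_\alpha$ (its harmonic form), and invoke Assumption~2 to form the intersection $[T_\alpha \wedge [Z]]$. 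By cohomologicity it represents the class $\alpha\cdot[Z]$; by supportivity its support is contained in $Z$; and property~(3) lets me push the support down to $X$ along the second projection. After the Hodge-theoretic bookkeeping that isolates the $\alpha$-component, the resulting current represents a non-zero multiple of $\alpha$ whose support lies in an algebraic surface $S\subset X$, placing $\alpha$ in $N^2H^4(X;\mathbb Q)$ and finishing the Hodge conjecture for $X$.

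\textbf{Main obstacle.} The critical step is constructing the correspondence $Z$ so that, after intersection and pushforward, one genuinely recovers $\alpha$ rather than $\alpha$ multiplied by unwanted divisor classes or zero. Primitivity of $\alpha$ rules out the trivial candidates built from powers of $u$ and the diagonal, so the coniveau-one input supplied by $H^1(X;\mathbb Q)\neq 0$ is indispensable for enlarging the supply of correspondences. Inverting ``cup with $[Z]$'' on the relevant primitive Hodge substructure, so as to isolate $\alpha$ from the class one actually produces, is the step where Conjecture~1.1 is used essentially a second time, beyond the reductions of the first stage; it is the delicate interface between Assumption~1 and Assumption~2 in the argument.
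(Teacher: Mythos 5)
Your stages two and three do not go through as described, and the failure is at the decisive point. First, a bookkeeping error: the case $(p,q,k)=(0,1,3)$ of Conjecture 1.1 is the map $u\colon N^0H^{3}(X)\to N^1H^{5}(X)$ (degrees $2p+k$ and $2q+k$), not $u\colon H^1(X;\mathbb Q)\to N^1H^3(X;\mathbb Q)$; no admissible choice of $(p,q,k)$ with $p+q=n-k$ realizes $H^1\to H^3$, so the hypothesis $H^1(X;\mathbb Q)\neq 0$ cannot be converted into coniveau-one classes in $H^3$ this way. (The paper uses $H^1(X;\mathbb Q)\,u\subset M^1H^3(X)$ only to guarantee that a certain Gysin pushforward from $X\times E$ is a nonzero map.) Second, and more seriously, the support argument fails: the harmonic representative $T_\alpha$ has support equal to all of $X$, so the intersection of $pr_1^\ast T_\alpha$ with $[Z]$ on $X\times X$ is supported merely on $Z$ itself, and $pr_2(Z)=X$ for any correspondence acting nontrivially on $H^4$; supportivity therefore yields no codimension bound after pushforward, and $\alpha$ is not placed in $N^2H^4(X)$. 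To get a representative of positive coniveau out of the intersection of currents you must already know, cohomologically, that the class in play has positive coniveau --- which is precisely what the paper establishes before it ever invokes Assumption 2. You flag the construction of $Z$ as the "main obstacle," but the obstacle is not merely constructing $Z$: even granted such a $Z$, the mechanism you describe does not produce the support statement you need.

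The paper's route supplies exactly the missing input, by a structurally different path: it passes to $Y=X\times E$ to move out of middle degree, picks a generic $\theta\in M^2H^5(Y)$ Poincar\'e dual to $\alpha\otimes a'$, and proves $P_!(\theta)\in N^1H^3(X)$ by restricting to a smooth hyperplane section $X_3$ (a threefold, where the level-one generalized Hodge conjecture is known by [2]) and then using Conjecture 1.1 to divide out the factor of $u$. This coniveau statement is what lets it choose a representing current supported on an algebraic subset, bootstrap to $\theta\in N^2H^5(Y)$ via a resolution of that support, and only then apply the intersection of currents --- against a $1$-current on the elliptic-curve factor, not against a correspondence --- to isolate the K\"unneth component $\beta$ inside the codimension-two support $W$, where $\dim\tilde W=3$ makes the Lefschetz $(1,1)$ theorem available. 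Your plan has no substitute for the hyperplane-section/threefold step, and without it neither the coniveau claim nor the support claim can get started.
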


\begin{proof}
\begin{ass}
There is an algebraic cycle $\langle \beta\rangle \in H^4(X;\mathbb Q)$ for EACH 
non zero $\alpha\in Hdg^4(X) $ such that
the intersection number
\begin{equation}
(\alpha, \langle \beta\rangle)_X\neq 0
.\end{equation}

\end{ass}

The claim 3.2 (for ALL non-zero $\alpha$) is sufficient to prove the usual Hodge conjecture. The statement is called
 algebraic Pincar\'e duality and $\langle \beta\rangle$ an algebraic dual of $\alpha$.

\bigskip

{\it Proof of the claim 3.2}:  We start it by avoiding the middle dimension.
Let $E$ be an elliptic curve and 
$$Y=X\times E.$$
Also let 
$$P: Y\to X$$ be the projection.

Let $a, a'\in H^1(E; \mathbb Q)$ be a standard basis 
i.e. $$a\cup a'=1, a\cup a=a'\cup a'=0.$$ 

Then by taking the tensor product of sub-Hodge structures, we obtain that
\begin{equation}
 \alpha\otimes a'\in M^{2}H^{5}(Y).
\end{equation}

By the Poincar\'e duality, there is a $\theta\in M^{2}H^{5}(Y)$ such that
\begin{equation}
( \alpha\otimes a', \theta)_Y\neq 0.
\end{equation}

Let $\theta$ be generic in $M^{2}H^{5}(Y)$.  Next we would like to claim 
that 
$$\theta\in N^{2}H^{5}(Y).$$
Now we consider the Gysin homomorphism
\begin{equation}\begin{array}{ccc}
P_!: H^{\bullet}(Y;\mathbb Q) &\rightarrow & H^{\bullet-2}(X;\mathbb Q) 
.\end{array}\end{equation}
Notice $$P_!(M^{2}H^{5}(Y))=M^{1} H^{3}(X), $$ and
$M^{1} H^{3}(X)$ contains the nonzero subspace
$$H^1(X;\mathbb Q) u.$$ Thus $P_!$ is not a zero map.
Since $\theta$ is generic in the linear space $M^{2}H^{5}(Y)$, 
$P_!(\theta)\neq 0$.
Now obtain a non-zero cycle 
\begin{equation}
P_!(\theta)\in M^{1} H^{3}(X).
\end{equation}

(Note: $3$ is not the middle dimension).
\medskip

Next argument till (3.13) is the proof of the generalized Hodge conjecture of level 1 on any 4-folds. 
Applying the hard Lefschetz theorem 
\begin{equation}\begin{array}{ccc}
H^3(X;\mathbb Q) &\rightarrow & H^5(X;\mathbb Q)\\
\alpha  &\rightarrow & \alpha\cdot u,
\end{array}\end{equation}

we obtain that 
 $$P_!(\theta) u \in M^{2} H^{5}(X).$$
Let \begin{equation}\begin{array}{ccc}
i: X_{3} &\hookrightarrow & X
\end{array}\end{equation}
be the inclusion map of a smooth hyperplane section $X_{3}$ of $X$.
Then 
\begin{equation}
i_!\circ  i^\ast (P_!(\theta))=P_!(\theta) u.
\end{equation}

Notice \begin{equation}
i^\ast (P_!(\theta))\in M^{1} H^{3}(X_{3}).
\end{equation}
Notice $dim(X_{3})=3$.  By [2],

\begin{equation}
i^\ast (P_!(\theta))\in N^{1} H^{3}(X_{3}).
\end{equation}
(switch the $M$ to the $N$). 
Hence (3.8) implies 
\begin{equation}
P_!(\theta) u\in N^{2} H^{5}(X).
\end{equation}

\bigskip

Now we apply the conjecture 1.1 to obtain that 

\begin{equation}
P_!(\theta) \in N^{1} H^{3}(X).
\end{equation}
( In this step, the generalized Lesfchetz standard conjecture
 converts  $ P_!(\theta) \in M^{1} H^{3}(X)$ to $ P_!(\theta)) \in N^{1} H^{3}(X)$. ).

Next we prove the claim: 
\begin{ass}
\begin{equation}
\theta\in N^{2} H^{5}(Y).
\end{equation}
\end{ass}

{\it Proof  of the claim 3.3}: 

Let $T'$ be a current of a cellular cycle on $Y$ that represents $\theta$. Then 
$P_\ast(T')$ represents the class $ P_!(\theta)$.  
Then by the above argument, 
\begin{equation}
P_\ast(T')=T_{a}+dL
\end{equation}
where $T_a$ is a non-zero current supported on an algebraic set $Z'$ of codimension at least
$1$ (This is only true if $P_!$ is non-zero map).  This means $Z'$ is not an empty set.
 Let $t\in E$ be a point.  The product $dL\otimes {t}$  is a  current on $Y$.
Now we denote the  current  $$T'-dL\otimes {t} $$ by $T_\theta$.
Notice  that according to property 3, section 2, $T_\theta$, by adjusting the exact current in $T'$,  can be chosen such that

\begin{equation}
supp\biggl(P_\ast(T_\theta)\biggr)=P\biggl(supp(T_\theta)\biggr).\end{equation}

Since the pushforward  $$P_\ast (T_\theta) $$
is  supported on the algebraic set $Z'$, $T_\theta$ lies in  the algebraic set 
$$Z=Z'\times E$$ of codimension at least $1$.
Let $\tilde Z$ be a smooth resolution of the scheme $Z$.
We have the following composition map $j$:
\begin{equation}\begin{array}{ccccc}
j: \tilde Z &\rightarrow & Z &\rightarrow & Y
.\end{array}\end{equation}

Next we switch to the classes.
Cor. 8.2.8, [1] says there is an exact sequence 
\begin{equation}\begin{array}{ccccc}
 H^3(\tilde Z;\mathbb Q) &\stackrel{j_!}\rightarrow & H^{5}(Y;\mathbb Q) &\stackrel{r} \rightarrow 
 & H^{5}(Y-Z;\mathbb Q),
\end{array} \end{equation}
where $j_!$ is the Gysin homomorphism.
Since $T_\theta$ is on $Z$, $\theta$ lies in the kernel of $r$.
Then there is 
\begin{equation}
\theta_{\tilde Z}\in M^1 H^{3} (\tilde Z)
\end{equation}
such that 
\begin{equation}
j_! (\theta_{\tilde Z})=\theta 
.\end{equation}
(due to the strictness of the morphism $j_!$ of Hodge structures).
 Notice the dimension of $\tilde Z$ is 4. By the argument from (3.6) to (3.13) which
is the proof of the generalized Hodge conjecture of level 1 on 4-folds,  
\begin{equation}
\theta_{\tilde Z}\in N^1 H^{3}(\tilde Z).
\end{equation}
( In this step, the generalized Hodge conjecture for 4-folds converts the partially Hodge cycle
$\theta_{\tilde Z}\in M^1 H^{3} (\tilde Z)$ to the partially algebraic cycle $\theta_{\tilde Z}\in N^1 H^{3} (\tilde Z)$.).
 Thus we obtain that, 
\begin{equation}
 j_! (\theta_{\tilde Z})=\theta  \in N^2 H^{5}(Y).
\end{equation}

The proof of the claim 3.3 is completed.
\bigskip

Next argument is called ``descending construction".  It extracts  an algebraic cycle from the  partially algebraic cycle
$\theta$. 
By the claim 3.3,  there is an algebraic set $W$  of codimension at least $2$ in $Y$ such that 
$\theta$ is Poincar\'e dual to a singular cycle
$T_\theta$ lying in $W$. 
Next we denote a singular cycle and its associated current by the same letter. 
Applying the K\"unneth decomposition 
 $T_\theta$  must be in the form
of 
\begin{equation} T_\theta=\beta\otimes b+\beta'\otimes b'+\varsigma+ dK\end{equation}
  where $\beta, \beta'$ are closed currents on $X$, representing Hodge cycles,   
$b, b'$  represent  $a, a'$, $dK$ is exact   and
$\varsigma$ is the sum of closed currents in the form $\zeta\otimes c$ with $deg(c)=0, 2$.   
 \par

To show $\beta$ represents an algebraic class, 
we use the developed notion--intersection of currents over $\mathbb R$  in section 2.
Let $b''$ be a closed $1$-current in $E$ such that the intersections satisfy 
$$[b''\wedge b']=0, [b''\wedge b]=\{e\}$$
where $e\in E$. Then the intersection of currents 
\begin{equation}
[(X\otimes b'')\wedge T_\theta]=[(X\otimes b'')\wedge dK]+\beta\times \{e\}
\end{equation}
is a current supported on $W$ (property 2, section 2).  
Let $\tilde W$ be a smooth resolution of the scheme $W$. 
We obtain the diagram
\begin{equation}\begin{array}{ccccc}
H^2(\tilde W;\mathbb Q) &\stackrel{q_!}\rightarrow & H^6(Y;\mathbb Q)&\stackrel{R}
\rightarrow & H^6(Y-W;\mathbb Q)\\
 &\scriptstyle{\nu_!}\searrow & \downarrow\scriptstyle{P_!} &&\\
 & & H^4(X;\mathbb Q) &&,\end{array}\end{equation}
where the top sequence is the Gysin exact sequence, and $\nu_!$, which is a Gysin map,  is the composition 
of Gysin maps $q_!, P_!$.  
Let's observe the class of  $\beta\times \{e\}$.    
By the formula (3.23),  cohomology of $\beta\times \{e\}$ in  $H^6(Y;\mathbb Q)$, denoted by
$$\langle \beta\times \{e\}\rangle$$
( symbol $\langle \cdot\rangle$ denotes the cohomology class)  is in the kernel of $R$. Hence it has a preimage
$$\phi\in H^2(\tilde W;\mathbb Q).$$
Notice that $\phi$ is  Hodge by the strictness of the morphism of Hodge structures (not all preimages are Hodge). Since the $dim(\tilde W)=3$,  the Lefschetz theorem on $(1, 1)$ classes 
implies that $\phi$ is algebraic.  Then the Gysin image $\nu_!(\phi)$ is also algebraic.
By the property 1, section 2, the cohomology of the current $\beta$ 
is exactly $\nu_!(\phi)$ (by the definition of $\phi$). Hence  $\beta$  represents a fundamental class of
an algebraic cycle. 
Then the intersection number 
\begin{equation}
(\alpha\otimes a', \theta)_Y=(\alpha, \langle \beta\rangle )_X\neq 0
\end{equation}
shows $\alpha$ has an algebraic dual $ \langle\beta\rangle$. We complete the proof of claim 3.2.

\bigskip

\end{proof}

\bigskip

\bigskip

PLEASE TURN TO THE NEXT PAGE.

\vfill\eject

 The following is the chart for the process.
$$
 \begin{array}{ccccc}
 Spaces & & \quad & Cohomology\ classes & Levels\\
 &&&& \\
 X && \quad  & \alpha & Hodge\ 0\\
 &&&\\
\bigg\downarrow\scriptstyle{I} &&& \bigg\downarrow &  \bigg\downarrow \\&&&&\\
 X\times E && \quad  &\alpha\otimes a'\Leftrightarrow \theta &Hodge\ 1\\&&&&\\
 \bigg\downarrow\scriptstyle{II}  &&& \bigg\downarrow & \bigg\downarrow\\&&&&\\&&&& \\
X && \quad  &P_!(\theta) \Leftrightarrow P_!(\theta) u  & Hodge\ 1\\&&&\\
 \bigg\downarrow\scriptstyle{III}  &&& \bigg\downarrow & \bigg\downarrow \\&&&&\\
 X\times E && \quad & \theta &Algebraic \ 1\\&&&\\&&&\\
  \bigg\downarrow\scriptstyle{IV}  &&& \bigg\downarrow & \bigg\downarrow \\&&&&\\
  X && \quad &\langle\beta\rangle & Algebraic\ 0

 \end{array}$$

 I: through the  tensor product; II:  through a Gysin map; III: through another type of Gysin map;   
IV: through the descending construction.
\bigskip

\end{document}